\documentclass[11pt, a4paper]{article}
\usepackage{amsmath, amsthm, amssymb, url}
\usepackage[margin=1.1in]{geometry}
\usepackage[english]{babel}
\usepackage{authblk}

\usepackage{tikz}
\usetikzlibrary{arrows.meta, calc, patterns}
\usepackage[hidelinks]{hyperref}

\theoremstyle{plain}
\newtheorem{theorem}{Theorem}[section]
\newtheorem{lemma}[theorem]{Lemma}
\newtheorem{corollary}[theorem]{Corollary}
\newtheorem{proposition}[theorem]{Proposition}
\theoremstyle{definition}
\newtheorem{definition}[theorem]{Definition}
\newtheorem{example}[theorem]{Example}
\theoremstyle{remark}
\newtheorem{remark}[theorem]{Remark}

\title{Idempotence criteria for lazy cellular automata}
\author[1]{Edgar Alcal\'{a}-Arroyo}
\author[1]{Alonso Castillo-Ramirez\footnote{\texttt{alonso.castillor@academicos.udg.mx}}}
\author[1]{Maria G. Maga\~na-Ch\'avez}
\author[1]{Luguis de los Santos Ba\~nos}

\affil[1]{Universidad de Guadalajara, Centro Universitario de Ciencias Exactas e Ingenier\'{i}as, Departamento de Matem\'{a}ticas. Guadalajara, M\'{e}xico}

\date{}

\begin{document}

\maketitle

\begin{abstract}
A lazy cellular automaton $\tau:A^G\to A^G$ is determined by a finite neighborhood $S\subseteq G$ containing the group identity $e$, a pattern $p\in A^S$, and a writing symbol $a\in A\setminus\{p(e)\}$: its local rule is equal to the projection to $e$ except when it reads the pattern $p$, in which case it writes $a$. For a nonempty subset $T$ of the fiber $S_a:= p^{-1}(a)$, we introduce compatibility conditions on $p$ along the right translates $St$, $t\in T$. We show that, over every group, the existence of a compatible set is necessary for $\tau$ to be non-idempotent, and that it is also sufficient whenever $st = ts$ for all $t \in S_a$, $s \in S_{p(e)}$. In particular, compatibility characterizes non-idempotence over abelian groups, which reduces deciding idempotence to a search over subsets of $S_a$. Finally, for every dihedral group $D_n$ with $n\geq 4$, we construct an idempotent lazy cellular automaton that admits a compatible set, showing that the commutation hypothesis cannot simply be dropped.
\end{abstract}

\section{Introduction}

Cellular automata (CA) are discrete models defined by a local rule applied simultaneously across the whole space. Following \cite{CSC10}, we consider CA over the configuration space $A^G$ of all functions $G\to A$, where $G$ is a group (the \emph{universe}) and $A$ is a finite set with at least two elements (the \emph{alphabet}). A cellular automaton $\tau:A^G\to A^G$ is determined by a finite \emph{neighborhood} $S\subseteq G$ and a \emph{local rule} $\mu:A^S\to A$ (see Section~\ref{sec:preliminaries}). A cellular automaton is \emph{lazy} if, for some neighborhood $S$ containing the group identity $e \in G$, its local rule leaves the central symbol unchanged on every pattern in $A^S$ except a single pattern $p$, on which it writes a symbol $a\neq p(e)$. 

Idempotence is a fundamental concept in algebra and computer science, as it ensures strong stability of a system. In our context, a cellular automaton $\tau:A^G\to A^G$ is \emph{idempotent} if $\tau^2:= \tau \circ \tau =\tau$. The characterization of idempotence for lazy CA was studied in \cite{IDEM} and \cite{One-dim}: the first paper presented results for constant, symmetric and quasi-constant patterns over arbitrary groups, while the second addressed one-dimensional cellular automata whose neighborhood $S$ is an interval. In \cite{Lazy}, general results about the order of lazy CA were presented. 

In this paper, we study criteria for the idempotence and non-idempotence of lazy CA. Our starting point is Lemma~\ref{lem:q}, a finite criterion for non-idempotence over arbitrary groups. It reformulates \cite[Lemma~3]{IDEM}, adapted to our shift convention, in terms of patterns on the finite set $SS$ of all products $st$ with $s,t\in S$; the resulting search over finitely many patterns also appears in \cite[Remark~1]{One-dim}. This criterion involves the local rule $\mu$ and a search over $|A|^{|SS|}$ patterns. The aim of this article is to replace the search over patterns on $SS$ by a search over compatible subsets of the fiber $S_a:=\{s\in S:p(s)=a\}$.

For a nonempty set $T\subseteq S_a$, we introduce two compatibility conditions on $p$ along the right translates $St$, $t\in T$ (Definition~\ref{def:compatible}). They say that the translated copies of $p$ on these sets fit together with one another and with the values of $p$ on $S\setminus T$ (Lemma~\ref{lem:gluing}). Our main results are the following.
\begin{enumerate}
    \item Over every group, if $\tau$ is not idempotent, then some nonempty $T\subseteq S_a$ is compatible (Proposition~\ref{prop:necessity}).
    \item The converse holds whenever every element of the fiber $S_a$ commutes with every element of the fiber $S_{p(e)}$ (Theorem~\ref{thm:main}). In particular, compatibility characterizes non-idempotence over every abelian group (Corollary~\ref{cor:abelian}), so over these groups idempotence can be decided by testing subsets of $S_a$ for compatibility.
    \item The commutation hypothesis cannot simply be dropped: for every $n\geq 4$ we construct an idempotent lazy cellular automaton over the dihedral group $D_n$ that admits a compatible set (Proposition~\ref{prop:dihedral-family}).
\end{enumerate}
Corollary~\ref{cor:abelian} resolves the abelian-group case of the problem, posed in \cite{IDEM}, of characterizing idempotence of lazy cellular automata. It also extends the characterization of \cite{One-dim} for interval neighborhoods in $\mathbb{Z}$ to arbitrary finite neighborhoods in arbitrary abelian groups. Example~\ref{ex:singletons}, whose neighborhood in $\mathbb{Z}$ is not an interval, shows that a compatible set may need to contain several active centers simultaneously.

\section{Preliminaries}\label{sec:preliminaries}

In this section we recall some basic results from the theory of cellular automata over groups (see \cite[Ch. 1]{CSC10}). Let $G$ be a group with identity element $e$, let $A$ be a finite alphabet with $|A|\geq 2$, and let $S\subseteq G$ be a finite subset of $G$. The elements of $A^S$ are called \emph{patterns on $S$}.

\begin{definition}\label{d.shift}
The \emph{shift action} of $G$ on $A^G$ is the function $\cdot : G \times A^G \to A^G$ defined by
\[ (g \cdot x)(h) := x(hg), \quad \forall x \in A^G,\ g, h \in G. \]
\end{definition}

We also define the shift action on patterns. 

\begin{definition}\label{d.shift-patterns}
For a finite $S\subseteq G$, $p\in A^S$ and $g\in G$, define $g\cdot p\in A^{Sg^{-1}}$ by
\[
(g\cdot p)(h):=p(hg), \qquad h\in Sg^{-1}.
\]
Two patterns $p \in A^S$ and $p' \in A^{S'}$ \emph{agree} if $p|_{S\cap S'}=p'|_{S\cap S'}$.
\end{definition}

The shift action on patterns is compatible with restriction in the sense that $(g\cdot x)|_{Sg^{-1}}=g\cdot(x|_S)$ for every $x\in A^G$. A family of pairwise agreeing patterns $p_i\in A^{S_i}$ \emph{glues} to a unique pattern on $\bigcup_i S_i$ whose restriction to each $S_i$ is $p_i$.

\begin{definition}
A \emph{cellular automaton} is a function $\tau : A^G \to A^G$ such that there exist a finite subset $S \subseteq G$, called a \emph{neighborhood} of $\tau$, and a \emph{local function}, or \emph{local rule}, $\mu : A^S \to A$, such that
\[ \tau(x)(g) = \mu( (g \cdot x) \vert_S ), \quad \forall x \in A^G,\ g \in G. \]
\end{definition}

\begin{definition}
A cellular automaton $\tau:A^G\to A^G$ is \emph{lazy} if it admits a local defining rule $\mu:A^S\to A$ with $e\in S$ for which there exists $p\in A^S$ such that
\[
\mu(z)=z(e) \Longleftrightarrow z\neq p, \qquad \forall z\in A^S.
\]
We call $p$ the \emph{unique active transition} of $\tau$, and $a:=\mu(p)\in A\setminus\{p(e)\}$ the \emph{writing symbol} of $\tau$.
\end{definition}

Throughout the paper, $\tau : A^G \to A^G$ denotes a lazy cellular automaton with local rule $\mu:A^S\to A$, unique active transition $p\in A^S$ and writing symbol $a$.

\begin{example}
Let $G=\mathbb{Z}$, $A := \{ 0,1 \}$ and $S := \{-1,0,1\}$. The cellular automata that admit the neighborhood $S$ are called \emph{elementary cellular automata} (ECA), and they are labeled by a \emph{Wolfram number} (see \cite[Sec.~2.5]{Kari}). The ECA rule 76 is lazy, because its local rule $\mu: A^S\to A$ is given by the following table:
\[ \begin{tabular}{c|cccccccc}
$z \in A^S$ & $111$ & $110$ & $101$ & $100$ & $011$ & $010$ & $001$ & $000$ \\ \hline
$\mu(z) \in A$ & $0$ & $1$ & $0$ & $0$ & $1$ & $1$ & $0$ & $0$
\end{tabular}\]
In this example, the pattern $p = 111 \in A^S$ is the unique active transition, and the writing symbol is $a=0$.
\end{example}

The \emph{fiber} of $p$ at any $b \in A$ is
\[
S_b=\{s\in S:p(s)=b\}.
\]
When $b=a$ is the writing symbol, then $e\notin S_a$, because $p(e)\neq a$.

We say that a pattern $p\in A^S$ \emph{appears} in $x \in A^G$ if there is $g \in G$ such that $(g \cdot x)|_S = p$.

\begin{lemma}\label{cor-idem}
The lazy cellular automaton $\tau$ is not idempotent if and only if there exists $x \in A^G$ such that $p$ appears in $\tau(x)$.
\end{lemma}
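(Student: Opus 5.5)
The plan is to prove both directions directly from the definition of a lazy cellular automaton, using only the observation that $\tau$ changes a configuration exactly at the positions where $p$ appears.

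First record that observation. For $y\in A^G$ and $g\in G$, we have $\tau(y)(g)=\mu((g\cdot y)|_S)$. By laziness this equals $((g\cdot y)|_S)(e)=y(g)$ unless $(g\cdot y)|_S=p$, in which case it equals $a\neq y(g)$. Here we use that $y(g)=(g\cdot y)(e)=p(e)\neq a$. Hence
\[
\tau(y)=y \iff p \text{ does not appear in } y .
\]

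For the direction ($\Leftarrow$), suppose $p$ appears in $\tau(x)$ for some $x\in A^G$. Put $y=\tau(x)$. By the observation, $\tau(y)\neq y$, that is, $\tau^2(x)\neq\tau(x)$, so $\tau$ is not idempotent.

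For the direction ($\Rightarrow$), suppose $\tau^2\neq\tau$, and pick $x$ with $\tau(\tau(x))\neq\tau(x)$. Put $y=\tau(x)$. Then $\tau(y)\neq y$, so by the observation $p$ appears in $y=\tau(x)$.

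No step is a real obstacle. The only point needing care is the fact that at an active position the value really changes, which uses $a\neq p(e)$ together with the shift convention $(g\cdot y)(e)=y(g)$.
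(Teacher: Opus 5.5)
Your proof is correct. The paper gives no in-house proof: it cites \cite[Lemma~2]{IDEM} and sketches only the ($\Leftarrow$) direction, which coincides with yours. Your fixed-point characterization ($\tau(y)=y$ if and only if $p$ does not appear in $y$) makes both directions self-contained. You also handle the one delicate point correctly: at an active cell $g$ one has $y(g)=(g\cdot y)(e)=p(e)\neq a$, which uses $e\in S$.
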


Lemma~\ref{cor-idem} follows from \cite[Lemma~2]{IDEM}. Intuitively, $\tau$ is non-idempotent if there exists a configuration $x$ such that the pattern $p$ reappears after applying $\tau$. Applying $\tau$ again then forces a change, so $\tau(x)\neq\tau^2(x)$.

\section{A finite criterion for non-idempotence}\label{sec:finite}

Let $SS:=\{st:s,t\in S\}$. Since $e\in S$, we have $S\subseteq SS$ and $St\subseteq SS$ for every $t\in S$. For $q\in A^{SS}$ and $s\in S$, define $q_s\in A^S$ by
\[
q_s(u):=q(us), \qquad \forall u\in S.
\]
In terms of Definition~\ref{d.shift-patterns}, $q_s=(s\cdot q)|_S$, which is defined because $Ss\subseteq SS$. If $x\in A^G$ and $q=x|_{SS}$, then $q_s=(s\cdot x)|_S$. Hence the patterns that $\tau$ reads at the cells $s\in S$ are completely determined by $x|_{SS}$.

The following lemma is a reformulation of \cite[Lemma~3]{IDEM}, adapted to the shift convention of Definition~\ref{d.shift} and stated in terms of patterns on $SS$. The explicit search over finitely many patterns also appears in \cite[Remark~1]{One-dim}. Since $A$ and $SS$ are finite, the lemma yields a finite combinatorial test for idempotence. The algorithmic statements in this paper assume that products and equality of the relevant group elements can be computed.

\begin{lemma}\label{lem:q}
The lazy cellular automaton $\tau$ is not idempotent if and only if there exists $q\in A^{SS}$ such that
\begin{equation}\label{eq:criterio-mu}
\mu(q_s)=p(s)
\qquad \forall s\in S.
\end{equation}
\end{lemma}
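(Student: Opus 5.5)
The plan is to reduce Lemma~\ref{lem:q} to Lemma~\ref{cor-idem}, using shift invariance to place the reappearing copy of $p$ at the identity. The bridge between the two statements is the identity
\[
\tau(x)(s)=\mu\bigl((s\cdot x)|_S\bigr)=\mu\bigl((x|_{SS})_s\bigr), \qquad s\in S,
\]
which holds because $q_s=(s\cdot x)|_S$ whenever $q=x|_{SS}$, as noted just before the statement. Hence $\tau(x)|_S$ is determined by $x|_{SS}$, and condition \eqref{eq:criterio-mu} for $q=x|_{SS}$ says exactly that $\tau(x)|_S=p$.

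For the forward direction, assume $\tau$ is not idempotent. Lemma~\ref{cor-idem} gives $x\in A^G$ and $g\in G$ with $(g\cdot\tau(x))|_S=p$. Since cellular automata are $G$-equivariant, $g\cdot\tau(x)=\tau(g\cdot x)$. This follows directly from the definitions:
\[
\tau(g\cdot x)(h)=\mu\bigl((h\cdot(g\cdot x))|_S\bigr)=\mu\bigl((hg\cdot x)|_S\bigr)=\tau(x)(hg)=(g\cdot\tau(x))(h),
\]
where we use $h\cdot(g\cdot x)=hg\cdot x$, since $(h\cdot(g\cdot x))(k)=(g\cdot x)(kh)=x(khg)$. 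Setting $y:=g\cdot x$, we get $\tau(y)|_S=p$. Taking $q:=y|_{SS}$, the bridge identity gives $\mu(q_s)=\tau(y)(s)=p(s)$ for all $s\in S$, which is \eqref{eq:criterio-mu}.

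For the converse, let $q\in A^{SS}$ satisfy \eqref{eq:criterio-mu}, and extend $q$ arbitrarily to some $x\in A^G$ with $x|_{SS}=q$. Then $\tau(x)(s)=\mu(q_s)=p(s)$ for every $s\in S$. So $(e\cdot\tau(x))|_S=p$, that is, $p$ appears in $\tau(x)$, and Lemma~\ref{cor-idem} shows that $\tau$ is not idempotent.

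The only step needing care is the equivariance computation under the right-multiplication convention of Definition~\ref{d.shift}. One must check the order $hg$ rather than $gh$, so that $h\cdot(g\cdot x)=hg\cdot x$. Everything else is direct unwinding of the definitions.
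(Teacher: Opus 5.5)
Your proof is correct and follows the paper's argument: the paper's witness $q(z):=x(zg)$ is exactly your $(g\cdot x)|_{SS}$, with the equivariance computation done inline rather than stated as a separate step. The only difference is in the converse, where the paper verifies directly that $\tau^2(x)(e)=\mu(p)=a\neq p(e)=\tau(x)(e)$ instead of citing the ``if'' direction of Lemma~\ref{cor-idem}.
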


\begin{proof}
Suppose that $\tau$ is not idempotent. By Lemma~\ref{cor-idem}, there is $x\in A^G$ such that $p$ appears in $\tau(x)$; that is, there is $g\in G$ such that $(g\cdot\tau(x))|_S=p$. Define $q\in A^{SS}$ by
\[
q(z):=x(zg)=(g\cdot x)(z),
\qquad z\in SS.
\]
For $s,u\in S$ we have
\[
q_s(u)=q(us)=x(usg)=((sg)\cdot x)(u),
\]
and hence $q_s=((sg)\cdot x)|_S$. Consequently,
\[
\mu(q_s)=\mu(((sg)\cdot x)|_S)=\tau(x)(sg)=(g\cdot\tau(x))(s)=p(s)
\]
for every $s\in S$.

Conversely, suppose that there exists $q\in A^{SS}$ such that $\mu(q_s)=p(s)$ for all $s\in S$. Extend $q$ arbitrarily to a configuration $x\in A^G$ with $x|_{SS}=q$. Then
\[
\tau(x)(s)=\mu((s \cdot x)|_S)=\mu(q_s)=p(s)
\qquad \forall s\in S,
\]
and therefore $\tau(x)|_S=p$. In particular,
\[
\tau(x)(e)=p(e)
\qquad\text{and}\qquad
\tau^2(x)(e)=\mu(\tau(x)|_S)=\mu(p)=a.
\]
Since $a\neq p(e)$, we conclude that $\tau^2(x)\neq\tau(x)$.
\end{proof}

\begin{example}\label{ex.1}
Let $G=\mathbb{Z}$, $S = \{-1,0,1,2\}$ and $A=\{0,1\}$, and consider the lazy cellular automaton with unique active transition
\[ p =\left( \begin{tabular}{cccc}
  $-1$ & $0$ & $1$ & $2$ \\
 0 & 0 & 1 & 1
\end{tabular} \right)
\]
and writing symbol $a=1$. Here we are using the two-line convention to denote the pattern $p \in A^S$, where the first row corresponds to the elements of $S$ and the second row to the image of these elements under $p$. This lazy cellular automaton is not idempotent, because
\[ q =\left( \begin{tabular}{ccccccc}
 $-2$ &  $-1$ & $0$ & $1$ & $2$ & $3$ & $4$ \\
  0 & 0 & 0 & 0 & 1 & 1 & 0
\end{tabular} \right) \in A^{SS}
\]
satisfies
\[ \mu(q_{-1})\, \mu(q_0)\, \mu(q_1)\, \mu(q_2) = \mu(0000)\, \mu(0001)\, \mu(0011)\, \mu(0110) = 0011 = p.  \]
\end{example}

For $q\in A^{SS}$, define the set of \emph{active centers} within $S$ by
\[
T(q):=\{t\in S:q_t=p\}.
\]

\begin{lemma}\label{lem:Tq-nonempty}
If $q\in A^{SS}$ satisfies \eqref{eq:criterio-mu}, then $T(q)\neq \emptyset$.
\end{lemma}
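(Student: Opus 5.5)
The plan is a proof by contradiction that looks only at the identity cell. Suppose $q\in A^{SS}$ satisfies \eqref{eq:criterio-mu} and $T(q)=\emptyset$, so that $q_t\neq p$ for every $t\in S$. By the definition of a lazy local rule, $\mu(z)=z(e)$ whenever $z\neq p$. Hence, for every $s\in S$,
\[
p(s)=\mu(q_s)=q_s(e)=q(es)=q(s).
\]
In other words, $q|_S=p$.

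Next I use the cell $e\in S$ itself. By definition, $q_e(u)=q(ue)=q(u)$ for all $u\in S$, so $q_e=q|_S=p$. This gives $e\in T(q)$, contradicting $T(q)=\emptyset$.

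Alternatively, the conclusion can be phrased directly: either some $t\in S$ already satisfies $q_t=p$, or the computation above forces $q_e=p$. In both cases $T(q)\neq\emptyset$.

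The only point needing care is the identity $q_s(e)=q(s)$, which follows immediately from $q_s(u)=q(us)$ with $u=e$. No step here is a real obstacle; the argument uses only that $e\in S$ and that $\mu$ acts as the projection to $e$ off the pattern $p$.
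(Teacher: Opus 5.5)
Your proof is correct and is essentially the paper's argument. Assuming $T(q)=\emptyset$, laziness and \eqref{eq:criterio-mu} give $q(s)=p(s)$ for all $s\in S$, hence $q_e=p$ and $e\in T(q)$, which is the contradiction the paper also derives.
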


\begin{proof}
Suppose, for a contradiction, that $T(q)=\emptyset$. Then $q_s\neq p$ for every $s\in S$, so laziness and \eqref{eq:criterio-mu} give
\[
p(s)=\mu(q_s)=q_s(e)=q(s), \qquad \forall s\in S.
\]
Since $e\in S$, it follows that $q_e(u)=q(u)=p(u)$ for every $u\in S$; that is, $q_e=p$. Hence $e\in T(q)$, a contradiction.
\end{proof}

\begin{lemma}\label{thm:q-T}
The lazy cellular automaton $\tau$ is not idempotent if and only if there exist $q\in A^{SS}$ and a nonempty set $T\subseteq S_a$ such that
\begin{align}
q_t&=p &&\text{for all }t\in T,\label{eq:copias-T}\\
q(s)&=p(s) &&\text{for all }s\in S\setminus T,\label{eq:base-fuera-T}\\
q_s&\neq p &&\text{for all }s\in S\setminus T.\label{eq:no-copias-fuera-T}
\end{align}
In this case, $T=T(q)$.
\end{lemma}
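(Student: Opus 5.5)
The plan is to derive both directions from Lemma~\ref{lem:q}, using the laziness of $\mu$ to translate condition \eqref{eq:criterio-mu} cell by cell into the three conditions \eqref{eq:copias-T}--\eqref{eq:no-copias-fuera-T}.

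For the forward direction, assume $\tau$ is not idempotent. By Lemma~\ref{lem:q} we obtain $q\in A^{SS}$ satisfying \eqref{eq:criterio-mu}, and we set $T:=T(q)$, which is nonempty by Lemma~\ref{lem:Tq-nonempty}. First we show $T\subseteq S_a$. If $t\in T$, then $q_t=p$, so $\mu(q_t)=\mu(p)=a$. On the other hand, \eqref{eq:criterio-mu} gives $\mu(q_t)=p(t)$, hence $p(t)=a$. With this choice, \eqref{eq:copias-T} and \eqref{eq:no-copias-fuera-T} hold by the very definition of $T(q)$. For \eqref{eq:base-fuera-T}, take $s\in S\setminus T$. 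Then $q_s\neq p$, so laziness gives
\[
p(s)=\mu(q_s)=q_s(e)=q(s).
\]

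For the converse, suppose $q$ and $T$ satisfy the three conditions. We verify \eqref{eq:criterio-mu} in two cases. If $t\in T$, then $q_t=p$, so $\mu(q_t)=a=p(t)$ because $T\subseteq S_a$. If $s\in S\setminus T$, then $q_s\neq p$ by \eqref{eq:no-copias-fuera-T}, so $\mu(q_s)=q(s)=p(s)$ by laziness and \eqref{eq:base-fuera-T}. Lemma~\ref{lem:q} then shows that $\tau$ is not idempotent.

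Finally we prove $T=T(q)$ whenever the conditions hold. The inclusion $T\subseteq T(q)$ is \eqref{eq:copias-T}, and the reverse inclusion $T(q)\subseteq T$ is the contrapositive of \eqref{eq:no-copias-fuera-T}. Nothing here is a real obstacle. The only step requiring care is the forward direction's check that the active centers lie in $S_a$, which is where the relation $\mu(p)=a$ enters.
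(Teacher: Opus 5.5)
Your proposal is correct and follows the paper's proof essentially step for step. Both directions go through Lemma~\ref{lem:q} with $T:=T(q)$, use $\mu(p)=a$ to get $T\subseteq S_a$, and use laziness to obtain $q(s)=p(s)$ off $T$; the identification $T=T(q)$ is read off from \eqref{eq:copias-T} and \eqref{eq:no-copias-fuera-T}.
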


\begin{proof}
First suppose that $\tau$ is not idempotent. By Lemma~\ref{lem:q}, there exists $q\in A^{SS}$ satisfying \eqref{eq:criterio-mu}. Let $T:=T(q)$, which is nonempty by Lemma~\ref{lem:Tq-nonempty}. By definition, $q_t=p$ for all $t\in T$, so \eqref{eq:copias-T} holds. If $t\in T$, then
\[
p(t)=\mu(q_t)=\mu(p)=a,
\]
so $T\subseteq S_a$. Moreover, if $s\in S\setminus T$, then $q_s\neq p$ and
\[
p(s)=\mu(q_s)=q_s(e)=q(s).
\]
Therefore, conditions \eqref{eq:base-fuera-T} and \eqref{eq:no-copias-fuera-T} are satisfied.

Now suppose that $q\in A^{SS}$ and a nonempty set $T\subseteq S_a$ satisfy \eqref{eq:copias-T}--\eqref{eq:no-copias-fuera-T}. For $t\in T$,
\[
\mu(q_t)=\mu(p)=a=p(t),
\]
while, for $s\in S\setminus T$, conditions \eqref{eq:no-copias-fuera-T} and \eqref{eq:base-fuera-T} give
\[
\mu(q_s)=q_s(e)=q(s)=p(s).
\]
Therefore, $\mu(q_s)=p(s)$ for all $s\in S$, and Lemma~\ref{lem:q} implies that $\tau$ is not idempotent. Finally, \eqref{eq:copias-T} and \eqref{eq:no-copias-fuera-T} say precisely that $T=T(q)$.
\end{proof}

\begin{corollary}\label{cor:empty-fiber}
If $S_a=\emptyset$, that is, if the writing symbol $a$ does not occur in $p$, then $\tau$ is idempotent. In particular, every lazy cellular automaton whose unique active transition is a constant pattern is idempotent.
\end{corollary}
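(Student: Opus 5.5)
The plan is to argue by contraposition and read the corollary off Lemma~\ref{thm:q-T}. That lemma says $\tau$ is non-idempotent exactly when there are some $q\in A^{SS}$ and some \emph{nonempty} $T\subseteq S_a$ satisfying \eqref{eq:copias-T}--\eqref{eq:no-copias-fuera-T}. If $S_a=\emptyset$, the only subset of $S_a$ is $\emptyset$, so no nonempty $T$ exists. The right-hand side of Lemma~\ref{thm:q-T} therefore fails, and $\tau$ is idempotent. One could instead go through Lemma~\ref{lem:q} and Lemma~\ref{lem:Tq-nonempty}: any witness $q$ would have $T(q)\neq\emptyset$ with $T(q)\subseteq S_a$, which is impossible. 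This is really the same argument, already packaged in Lemma~\ref{thm:q-T}.

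For the second sentence, I would suppose $p$ is constant with value $b$, so $p(s)=b$ for all $s\in S$. In particular $b=p(e)$. The writing symbol satisfies $a\neq p(e)=b$, so no $s\in S$ has $p(s)=a$, and hence $S_a=\emptyset$. The first part then gives idempotence.

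I do not expect a real obstacle here. The only point to state explicitly is that nonemptiness of $T$ is built into Lemma~\ref{thm:q-T}, and that it comes from Lemma~\ref{lem:Tq-nonempty}.
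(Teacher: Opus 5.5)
Your proposal is correct and matches the paper. The paper states this corollary without a separate proof, as an immediate consequence of Lemma~\ref{thm:q-T}: a nonempty $T\subseteq S_a$ cannot exist when $S_a=\emptyset$, and a constant pattern has $S_a=\emptyset$ because $a\neq p(e)$.
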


Lemma~\ref{thm:q-T} decides idempotence of a lazy cellular automaton over any group without examining configurations $x\in A^G$. It reduces the problem to a finite search among the $|A|^{|SS|}$ patterns $q\in A^{SS}$.

\begin{example}\label{ex:Z10-q}
Let $G=\mathbb{Z}_{10}$, $S = \{0,1,3,4\}$ and $A=\{0,1\}$, and let $\tau$ be the lazy cellular automaton with unique active transition $p=(p(0),p(1),p(3),p(4))=(0,1,1,1)$ and writing symbol $a=1$, so that $S_1=\{1,3,4\}$. In additive notation, $S+S=\{0,1,\dots,8\}$. The pattern $q\in A^{S+S}$ with $(q(0),\dots,q(8))=(0,0,1,0,1,1,1,1,0)$ satisfies $q_1=q_3=p$, $q_0=(0,0,0,1)\neq p$, $q_4=(1,1,1,0)\neq p$, $q(0)=p(0)$ and $q(4)=p(4)$. By Lemma~\ref{thm:q-T} with $T=\{1,3\}\subseteq S_1$, the automaton $\tau$ is not idempotent.
\end{example}

\section{Compatible sets and the main theorem}\label{sec:main}

We now replace the search for $q$ in Lemma~\ref{thm:q-T} by a search over subsets $T\subseteq S_a$ satisfying two compatibility conditions on $p$. For $t\in S$, the pattern $t^{-1}\cdot p\in A^{St}$, given by
\[
(t^{-1}\cdot p)(z)=p(zt^{-1}), \qquad z\in St,
\]
is the copy of $p$ placed on the right translate $St=\{ut:u\in S\}$. Since $q_t(u)=q(ut)$ and $(t^{-1}\cdot p)(ut)=p(u)$ for $u\in S$, we have
\begin{equation}\label{eq:copy}
q_t=p \quad\Longleftrightarrow\quad q|_{St}=t^{-1}\cdot p.
\end{equation}
This motivates the following definition.

\begin{definition}\label{def:compatible}
A nonempty set $T\subseteq S_a$ is \emph{compatible} (with $p$) if it satisfies the following two conditions:
\begin{enumerate}
    \item For all $t,r\in T$ and all $z\in(St)\cap(Sr)$,
    \begin{equation}\label{eq:C1}
        p(zt^{-1})=p(zr^{-1}).
    \end{equation}
    \item For all $t\in T$ and all $z\in(St)\cap(S\setminus T)$,
    \begin{equation}\label{eq:C2}
        p(zt^{-1})=p(z).
    \end{equation}
\end{enumerate}
\end{definition}

\begin{lemma}\label{lem:gluing}
A nonempty set $T\subseteq S_a$ is compatible if and only if the patterns
\[
t^{-1}\cdot p\in A^{St}\ \ (t\in T)
\qquad\text{and}\qquad
p|_{S\setminus T}\in A^{S\setminus T}
\]
pairwise agree. In this case they glue to a pattern $P_T$ on $D_T:=\bigl(\bigcup_{t\in T}St\bigr)\cup(S\setminus T)\subseteq SS$.
\end{lemma}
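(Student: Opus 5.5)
The plan is to unpack what it means for the listed patterns to pairwise agree, and match each kind of pair with one of the two conditions in Definition~\ref{def:compatible}. The family consists of the translated copies $t^{-1}\cdot p\in A^{St}$ for $t\in T$, together with the single pattern $p|_{S\setminus T}$. So there are exactly two kinds of pairs to check: two translated copies, or a translated copy together with $p|_{S\setminus T}$.

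For two copies indexed by $t,r\in T$, agreement means that $(t^{-1}\cdot p)(z)=(r^{-1}\cdot p)(z)$ for every $z\in St\cap Sr$. By the formula $(t^{-1}\cdot p)(z)=p(zt^{-1})$ stated just before Definition~\ref{def:compatible}, this is literally \eqref{eq:C1}. The case $t=r$ is trivial.

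For a copy indexed by $t\in T$ paired with $p|_{S\setminus T}$, the domains intersect in $St\cap(S\setminus T)$. On this intersection, agreement reads $p(zt^{-1})=p(z)$, which is exactly \eqref{eq:C2}. Since both conditions are universally quantified over the same pairs and points, compatibility is equivalent to pairwise agreement in both directions. Nonemptiness of $T$ plays no role beyond matching the hypotheses of the definition.

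For the gluing statement, I would invoke the gluing remark from Section~\ref{sec:preliminaries}: pairwise agreeing patterns glue uniquely to a pattern on the union of their domains, and here that union is $D_T$. It then remains to check $D_T\subseteq SS$. For $t\in T\subseteq S$ we have $St\subseteq SS$, and $S\setminus T\subseteq S\subseteq SS$ because $e\in S$. There is no real obstacle; the only care needed is keeping the shift convention $(t^{-1}\cdot p)(z)=p(zt^{-1})$ straight, so that each agreement condition matches the corresponding compatibility condition exactly.
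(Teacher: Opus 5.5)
Your proposal is correct and follows essentially the same route as the paper: agreement of two translated copies is exactly \eqref{eq:C1}, agreement of a copy with $p|_{S\setminus T}$ is exactly \eqref{eq:C2}, and $D_T\subseteq SS$ holds because $St\subseteq SS$ and $S\subseteq SS$. You also appeal explicitly to the gluing remark from the preliminaries to produce $P_T$, which the paper's proof leaves implicit.
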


\begin{proof}
For $t,r\in T$ and $z\in St\cap Sr$, we have $(t^{-1}\cdot p)(z)=p(zt^{-1})$ and $(r^{-1}\cdot p)(z)=p(zr^{-1})$. Hence $t^{-1}\cdot p$ and $r^{-1}\cdot p$ agree if and only if \eqref{eq:C1} holds for the pair $t,r$. Similarly, for $t\in T$ and $z\in St\cap(S\setminus T)$, we have $(t^{-1}\cdot p)(z)=p(zt^{-1})$ and $p|_{S\setminus T}(z)=p(z)$. Hence $t^{-1}\cdot p$ and $p|_{S\setminus T}$ agree if and only if \eqref{eq:C2} holds for $t$. Finally, $D_T\subseteq SS$ because $St\subseteq SS$ and $S\subseteq SS$.
\end{proof}

Thus, $T \subseteq S_a$ is compatible precisely when the copies of $p$ placed on the translates $St$, $t\in T$ agree wherever they overlap, and each of these copies agrees with $p$ itself on $S\setminus T$.

\begin{proposition}\label{prop:necessity}
Let $G$ be any group. If $\tau$ is not idempotent, then there exists a compatible set $T\subseteq S_a$.
\end{proposition}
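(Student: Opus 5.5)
The plan is to take the witness pattern from Lemma~\ref{thm:q-T} and check that its set of active centers is compatible. Since $\tau$ is not idempotent, Lemma~\ref{thm:q-T} gives $q\in A^{SS}$ and a nonempty $T\subseteq S_a$ (in fact $T=T(q)$) satisfying \eqref{eq:copias-T}--\eqref{eq:no-copias-fuera-T}. We claim that this $T$ is compatible. By Lemma~\ref{lem:gluing}, it suffices to show that the patterns $t^{-1}\cdot p$ ($t\in T$) and $p|_{S\setminus T}$ pairwise agree. This holds because each of them is a restriction of the single pattern $q$ to a subset of $SS$.

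First, for $t\in T$, condition \eqref{eq:copias-T} and the equivalence \eqref{eq:copy} give $q|_{St}=t^{-1}\cdot p$. Second, condition \eqref{eq:base-fuera-T} says exactly that $q|_{S\setminus T}=p|_{S\setminus T}$. Restrictions of a common pattern always agree on overlaps, so the family pairwise agrees.

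Concretely, for \eqref{eq:C1} take $t,r\in T$ and $z\in St\cap Sr$. Then
\[
p(zt^{-1})=(t^{-1}\cdot p)(z)=q(z)=(r^{-1}\cdot p)(z)=p(zr^{-1}).
\]
For \eqref{eq:C2} take $t\in T$ and $z\in St\cap(S\setminus T)$. Then
\[
p(zt^{-1})=q(z)=p(z).
\]

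No step is a real obstacle. Condition \eqref{eq:no-copias-fuera-T} is not needed here; it only matters for the converse direction.
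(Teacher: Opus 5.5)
Your proposal is correct and matches the paper's proof: take $q$ and $T$ from Lemma~\ref{thm:q-T}, use \eqref{eq:copias-T} with \eqref{eq:copy} and \eqref{eq:base-fuera-T} to see that the patterns $t^{-1}\cdot p$ and $p|_{S\setminus T}$ are all restrictions of $q$, and conclude via Lemma~\ref{lem:gluing}. Your explicit check of \eqref{eq:C1} and \eqref{eq:C2}, and your remark that \eqref{eq:no-copias-fuera-T} is not used, are both accurate.
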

\begin{proof}
By Lemma~\ref{thm:q-T}, there exist $q\in A^{SS}$ and a nonempty set $T\subseteq S_a$ satisfying \eqref{eq:copias-T} and \eqref{eq:base-fuera-T}. By \eqref{eq:copias-T} and \eqref{eq:copy}, $q|_{St}=t^{-1}\cdot p$ for every $t\in T$, and \eqref{eq:base-fuera-T} says that $q|_{S\setminus T}=p|_{S\setminus T}$. These patterns are restrictions of the single pattern $q$, so they pairwise agree, and $T$ is compatible by Lemma~\ref{lem:gluing}.
\end{proof}

\begin{theorem}\label{thm:main}
Let $G$ be a group, and suppose that
\begin{equation}\label{eq:H}
ts=st \qquad \text{for all } t\in S_a \text{ and all } s\in S_{p(e)}.
\end{equation}
Then $\tau$ is not idempotent if and only if there exists a compatible set $T\subseteq S_a$.
\end{theorem}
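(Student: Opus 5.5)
The forward implication is Proposition~\ref{prop:necessity}, so only sufficiency needs proof. Let $T\subseteq S_a$ be compatible. I will build $q\in A^{SS}$ satisfying \eqref{eq:copias-T}--\eqref{eq:no-copias-fuera-T} and then apply Lemma~\ref{thm:q-T}.

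\textbf{Construction of $q$.} By Lemma~\ref{lem:gluing}, the patterns $t^{-1}\cdot p$ ($t\in T$) and $p|_{S\setminus T}$ glue to a pattern $P_T$ on $D_T\subseteq SS$. Extend $P_T$ arbitrarily to some $q\in A^{SS}$. Since $q|_{St}=t^{-1}\cdot p$, the equivalence \eqref{eq:copy} gives $q_t=p$ for every $t\in T$, which is \eqref{eq:copias-T}. Since $q|_{S\setminus T}=p|_{S\setminus T}$, condition \eqref{eq:base-fuera-T} also holds.

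\textbf{Verifying \eqref{eq:no-copias-fuera-T}.} This is the only step that needs an idea. I expect it to go through without any clever choice of extension, by a contradiction argument. Suppose $s\in S\setminus T$ and $q_s=p$.
\begin{itemize}
\item Evaluating at $u=e$ gives $q(s)=p(e)$. By \eqref{eq:base-fuera-T}, $q(s)=p(s)$, so $s\in S_{p(e)}$.
\item Fix any $t\in T$, which exists because $T$ is nonempty. Evaluating $q_s=p$ at $u=t$ gives $q(ts)=p(t)=a$, since $t\in S_a$.
\item On the other hand, $st\in St$, so $q(st)=(t^{-1}\cdot p)(st)=p(s)=p(e)$.
\end{itemize}
Hypothesis \eqref{eq:H} applies to $t\in S_a$ and $s\in S_{p(e)}$, so $ts=st$. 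This forces $a=p(e)$, contradicting the choice of the writing symbol $a\neq p(e)$. Hence $q_s\neq p$ for all $s\in S\setminus T$.

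\textbf{Conclusion.} With \eqref{eq:copias-T}--\eqref{eq:no-copias-fuera-T} established, Lemma~\ref{thm:q-T} shows that $\tau$ is not idempotent.

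The commutation hypothesis enters only in the last step, to identify the cell $ts$ read by $q_s$ with the cell $st$ already fixed by the copy $t^{-1}\cdot p$. Without \eqref{eq:H} these cells may differ and the argument breaks, which is consistent with the dihedral counterexamples announced in the introduction.
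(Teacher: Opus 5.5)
Your proof is correct and follows essentially the same approach as the paper: extend the glued pattern $P_T$ arbitrarily to $q$, then rule out a copy of $p$ on $Ss$ for $s\in S\setminus T$. The paper splits into the cases $p(s)\neq p(e)$ and $p(s)=p(e)$, whereas you phrase the same argument as a contradiction; in both, \eqref{eq:H} is used at the coordinate $t$ to identify $ts$ with $st$.
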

\begin{proof}
The direct implication is Proposition~\ref{prop:necessity}. Conversely, let $T\subseteq S_a$ be compatible, let $P_T$ be the pattern on $D_T$ given by Lemma~\ref{lem:gluing}, and let $q\in A^{SS}$ be any extension of $P_T$. Then $q|_{St}=t^{-1}\cdot p$, that is, $q_t=p$ by \eqref{eq:copy}, for every $t\in T$, and $q|_{S\setminus T}=p|_{S\setminus T}$. Hence conditions \eqref{eq:copias-T} and \eqref{eq:base-fuera-T} of Lemma~\ref{thm:q-T} hold.

It remains to prove that $q_s\neq p$ for every $s\in S\setminus T$. Let $s\in S\setminus T$. If $p(s)\neq p(e)$, then
\[
q_s(e)=q(s)=p(s)\neq p(e),
\]
and hence $q_s\neq p$. Suppose now that $p(s)=p(e)$. Since $T\neq\emptyset$, choose $t\in T$. Since $t\in S_a$, hypothesis \eqref{eq:H} gives $ts=st\in St$. Since $q|_{St}=t^{-1}\cdot p$,
\[
q_s(t)=q(ts)=q(st)=(t^{-1}\cdot p)(st)=p(s)=p(e)\neq a=p(t),
\]
and hence $q_s\neq p$.

Consequently, all the hypotheses of Lemma~\ref{thm:q-T} are satisfied, and $\tau$ is not idempotent.
\end{proof}

Hypothesis (8) is used only once in the proof: to identify \(ts\) with \(st\), so that the copy of \(p\) on \(St\) forces \(q(ts)=p(s)\). Since $e$ commutes with every element, \eqref{eq:H} only concerns the elements $s\neq e$ with $p(s)=p(e)$.

\begin{corollary}\label{cor:abelian}
Suppose that one of the following holds:
\begin{enumerate}
    \item $G$ is abelian;
    \item $S_a$ is contained in the center of $G$;
    \item $p(s)\neq p(e)$ for every $s\in S\setminus\{e\}$.
\end{enumerate}
Then $\tau$ is not idempotent if and only if there exists a compatible set $T\subseteq S_a$. 
\end{corollary}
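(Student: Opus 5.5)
The plan is to derive the corollary directly from Theorem~\ref{thm:main}. In each of the three cases we check that hypothesis \eqref{eq:H} holds, that is, $ts=st$ for all $t\in S_a$ and all $s\in S_{p(e)}$. Once that is done, the equivalence between non-idempotence and the existence of a compatible set $T\subseteq S_a$ is exactly the conclusion of the theorem. No new construction is needed; the direct implication is in any case already Proposition~\ref{prop:necessity}, valid over every group.

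In case~1, $G$ is abelian, so every pair of elements of $G$ commutes and \eqref{eq:H} is immediate. In case~2, each $t\in S_a$ lies in the center $Z(G)$. It therefore commutes with every element of $G$, in particular with every $s\in S_{p(e)}$, and \eqref{eq:H} holds again.

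In case~3, the hypothesis $p(s)\neq p(e)$ for all $s\in S\setminus\{e\}$ says precisely that $S_{p(e)}=\{e\}$; note that $e\in S_{p(e)}$ trivially. Hence \eqref{eq:H} reduces to $te=et$ for $t\in S_a$, which always holds. This matches the remark after the proof of Theorem~\ref{thm:main} that \eqref{eq:H} only concerns elements $s\neq e$ with $p(s)=p(e)$.

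There is no genuine obstacle here. The only point requiring a moment's care is case~3: we must observe that the condition on $p$ pins down the fiber $S_{p(e)}$ exactly, rather than merely restricting $S_a$.
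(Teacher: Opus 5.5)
Your proposal is correct and matches the paper's proof: the paper simply notes that hypothesis \eqref{eq:H} holds in each of the three cases and invokes Theorem~\ref{thm:main}. Your case-by-case check, in particular the observation that case~3 says exactly $S_{p(e)}=\{e\}$, spells out what the paper leaves implicit.
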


\begin{proof}
In each case hypothesis \eqref{eq:H} holds, so the result follows from Theorem~\ref{thm:main}.
\end{proof}

Under hypothesis \eqref{eq:H}, Theorem~\ref{thm:main} replaces the search over the $|A|^{|SS|}$ patterns $q\in A^{SS}$ of Lemma~\ref{lem:q} by a search over at most $2^{|S_a|}-1$ candidate sets $T\subseteq S_a$, each tested by the compatibility conditions \eqref{eq:C1} and \eqref{eq:C2}.

To bound the cost of checking one candidate, we use a finite-set lookup model: the products $us$ with $u,s\in S$ are computed once, the elements of $SS$ are represented by labels, and membership in subsets of $SS$, as well as recovering $u$ from $z=ur$ for $z\in Sr$, take constant time. Since $z=ut\in St$ gives $zt^{-1}=u$ directly, each candidate $T$ can then be checked with $O(|T|^2|S|)$ lookups. This is not an unconditional running-time bound for arbitrarily presented groups.

\begin{example}\label{ex:Z10-T}
For the automaton of Example~\ref{ex:Z10-q}, the singleton $T=\{1\}$ is compatible. Condition \eqref{eq:C1} is automatic, and $(S+1)\cap(S\setminus T)=\{1,2,4,5\}\cap\{0,3,4\}=\{4\}$, where $p(4-1)=p(3)=1=p(4)$. Hence $\tau$ is not idempotent by Corollary~\ref{cor:abelian}. The set $T=\{1,3\}$ used in Example~\ref{ex:Z10-q} is compatible as well.
\end{example}

\begin{example}\label{ex:singletons}
Let $G=\mathbb{Z}$, $A=\{0,1\}$ and $S=\{-3,-2,0,1,3\}$, and consider the lazy cellular automaton with unique active transition
\[ p =\left( \begin{tabular}{ccccc}
  $-3$ & $-2$ & $0$ & $1$ & $3$ \\
 0 & 1 & 0 & 1 & 0
\end{tabular} \right)
\]
and writing symbol $a=1$, so that $S_1=\{-2,1\}$.

The set $T=\{-2,1\}$ is compatible. With $t=-2$ and $r=1$, condition \eqref{eq:C1} requires $p(z+2)=p(z-1)$ for all
\[
z\in(S-2)\cap(S+1)=\{-5,-4,-2,-1,1\}\cap\{-2,-1,1,2,4\}=\{-2,-1,1\}.
\]
This holds, because $p(0)=0=p(-3)$, $p(1)=1=p(-2)$ and $p(3)=0=p(0)$. Since $S\setminus T=\{-3,0,3\}$ is disjoint from both $S-2$ and $S+1$, condition \eqref{eq:C2} holds vacuously. By Corollary~\ref{cor:abelian}, $\tau$ is not idempotent.

However, neither singleton is compatible. For $T=\{-2\}$, the point $z=1$ lies in $(S-2)\cap(S\setminus T)$, and $p(1+2)=p(3)=0\neq 1=p(1)$. For $T=\{1\}$, the point $z=-2$ lies in $(S+1)\cap(S\setminus T)$, and $p(-2-1)=p(-3)=0\neq 1=p(-2)$.
\end{example}

In Example~\ref{ex:Z10-T} a singleton suffices, but Example~\ref{ex:singletons} shows that compatible singletons need not exist in general, so Definition~\ref{def:compatible} genuinely requires sets.

\begin{remark}\label{rem:figures}
Figure~\ref{d1} illustrates condition~\eqref{eq:C1} and Lemma~\ref{lem:gluing} for the cellular automaton of Example~\ref{ex:singletons}, with $T=\{-2,1\}$. Over $\mathbb{Z}$ we have $St=S+t$, and the copy of $p$ on $S+t$ is $z\mapsto p(z-t)$. Each point $z$ in the intersection of $S-2$ and $S+1$ receives one value from each copy, and \eqref{eq:C1} requires these values to be equal.

Figure~\ref{d2} illustrates condition~\eqref{eq:C2} for the cellular automaton of Example~\ref{ex:Z10-T}, with $T=\{1\}$. A point $z$ of $S+1$ that also lies in $S\setminus T$ receives the value $p(z-1)$ from the copy and the value $p(z)$ from $p$ itself, and \eqref{eq:C2} requires these values to be equal. At the position $t$ itself, the copy of $p$ on $St$ carries $p(e)$, which differs from $p(t)=a$; this is why \eqref{eq:C2} is imposed only on $S\setminus T$.
\end{remark}

\begin{figure}[ht]
\centering
\begin{tikzpicture}[x=0.65cm, y=0.65cm,
    cell/.style={draw, minimum size=0.65cm, inner sep=0pt, anchor=center},
    lab/.style={anchor=east, font=\small}]
    \foreach \x in {-2,-1,1} {\fill[gray!25] (\x-0.5,2.45) rectangle (\x+0.5,4.55);}
    \foreach \x in {-5,...,4} {\node[font=\scriptsize] at (\x,5.1) {$\x$};}
    \node[lab] at (-5.8,4) {copy of $p$ on $S-2$};
    \foreach \x/\v in {-5/0,-4/1,-2/0,-1/1,1/0} {\node[cell] at (\x,4) {$\v$};}
    \draw[line width=1.6pt] (-2.5,3.5) rectangle (-1.5,4.5);
    \node[lab] at (-5.8,3) {copy of $p$ on $S+1$};
    \foreach \x/\v in {-2/0,-1/1,1/0,2/1,4/0} {\node[cell] at (\x,3) {$\v$};}
    \draw[line width=1.6pt] (0.5,2.5) rectangle (1.5,3.5);
    \node[lab] at (-5.8,1.8) {$p|_{S\setminus T}$};
    \foreach \x/\v in {-3/0,0/0,3/0} {\node[cell] at (\x,1.8) {$\v$};}
    \draw[gray] (-5.6,1.0) -- (4.6,1.0);
    \node[lab] at (-5.8,0.2) {glued pattern $P_T$};
    \foreach \x/\v in {-5/0,-4/1,-3/0,-2/0,-1/1,0/0,1/0,2/1,3/0,4/0} {\node[cell] at (\x,0.2) {$\v$};}
\end{tikzpicture}
\caption{Condition~\eqref{eq:C1} for Example~\ref{ex:singletons}: $G=\mathbb{Z}$, $S=\{-3,-2,0,1,3\}$, $p=(0,1,0,1,0)$ and $T=\{-2,1\}$. Each of the first two rows is a translated copy of $p$; the thick cell marks the position $t$, where the copy places $p(0)$. In the shaded columns the two copies overlap and carry the same values, so together with $p|_{S\setminus T}$ they glue to the pattern $P_T$ of Lemma~\ref{lem:gluing}. Here $S\setminus T$ meets neither copy, so \eqref{eq:C2} holds vacuously.}
\label{d1}
\end{figure}

\begin{figure}[ht]
\centering
\begin{tikzpicture}[x=0.65cm, y=0.65cm,
    cell/.style={draw, minimum size=0.65cm, inner sep=0pt, anchor=center},
    lab/.style={anchor=east, font=\small}]
    \fill[gray!25] (3.5,1.45) rectangle (4.5,3.55);
    \foreach \x in {0,...,5} {\node[font=\scriptsize] at (\x,4.1) {$\x$};}
    \node[lab] at (-0.8,3) {copy of $p$ on $S+1$};
    \foreach \x/\v in {1/0,2/1,4/1,5/1} {\node[cell] at (\x,3) {$\v$};}
    \draw[line width=1.6pt] (0.5,2.5) rectangle (1.5,3.5);
    \node[lab] at (-0.8,2) {$p$ on $S$};
    \foreach \x/\v in {0/0,3/1,4/1} {\node[cell] at (\x,2) {$\v$};}
    \node[cell, pattern=north east lines, pattern color=gray] at (1,2) {};
    \node[fill=white, inner sep=1pt] at (1,2) {$1$};
    \draw[dashed] (0.42,1.42) rectangle (1.58,3.58);
    \node[font=\small] at (1,0.85) {$1\in T$: no condition};
    \draw[gray] (-0.6,0.2) -- (5.6,0.2);
    \node[lab] at (-0.8,-0.6) {glued pattern $P_T$};
    \foreach \x/\v in {0/0,1/0,2/1,3/1,4/1,5/1} {\node[cell] at (\x,-0.6) {$\v$};}
\end{tikzpicture}
\caption{Condition~\eqref{eq:C2} for Example~\ref{ex:Z10-T}: $G=\mathbb{Z}_{10}$, $S=\{0,1,3,4\}$, $p=(0,1,1,1)$ and $T=\{1\}$ (hatched). The copy of $p$ on $S+1$ meets $S\setminus T$ only in the shaded column $4$, where both rows carry the value $1$. At $1\in T$ the copy carries $p(0)=0$, while $p(1)=a=1$; condition \eqref{eq:C2} imposes nothing there, and the glued pattern $P_T$ takes the value of the copy.}
\label{d2}
\end{figure}


\section{A family of counterexamples over dihedral groups}\label{sec:dihedral-counterexamples}

In this section we show that hypothesis \eqref{eq:H} cannot simply be dropped from Theorem~\ref{thm:main}. More precisely, we construct a family of lazy cellular automata over the dihedral groups $D_n$, $n\geq 4$, which admit a compatible set but are idempotent.

For every integer $n\geq 3$, consider the dihedral group
\[
D_n=\langle r,s\mid r^n=s^2=e,\ srs=r^{-1}\rangle
\]
of order $2n$. Define $v:=r^2s$ and take the neighborhood
\[
S:=\{e,s,r,v\}=\{e,s,r,r^2s\},
\]
which has four distinct elements because $n\geq 3$. Let $A=\{0,1\}$, and define the pattern $p\in A^S$ by
\[
\begin{array}{c|cccc}
g    & e & s & r & r^2s\\
\hline
p(g) & 0 & 0 & 1 & 1
\end{array}.
\]
Let $\tau\colon A^{D_n}\to A^{D_n}$ be the lazy cellular automaton with unique active transition $p$ and writing symbol $a:=1$, which is valid because $p(e)=0$. Then
\[
S_a=\{r,v\}.
\]
Hypothesis \eqref{eq:H} fails: $s\in S$ satisfies $p(s)=p(e)$, but $rs=sr^{-1}\neq sr$ because $r^2\neq e$.

\begin{proposition}\label{prop:dihedral-family}
For every $n\geq 3$, the lazy cellular automaton $\tau$ is idempotent. If $n\geq 4$, the set $T=\{r\}$ is compatible. Consequently, for $n\geq 4$ the conclusion of Theorem~\ref{thm:main} fails over $D_n$ without hypothesis~\eqref{eq:H}.
\end{proposition}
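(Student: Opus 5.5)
The plan is to handle the two claims separately: compatibility of $\{r\}$ is a direct check of Definition~\ref{def:compatible}, and idempotence follows by ruling out every candidate pair $(q,T)$ in Lemma~\ref{thm:q-T}. The relations used throughout are $sr=r^{-1}s$, $r^ks\cdot r=r^{k-1}s$ and $v^2=e$. The final ``consequently'' clause then simply combines the two parts.

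\emph{Compatibility of $T=\{r\}$ for $n\geq 4$.} Condition \eqref{eq:C1} is automatic for a singleton. For \eqref{eq:C2}, compute
\[
Sr=\{r,\ sr,\ r^2,\ vr\}=\{r,\ r^{-1}s,\ r^2,\ rs\}
\]
and compare it with $S\setminus T=\{e,s,r^2s\}$:
\begin{itemize}
  \item $r^2\neq e$ because $n\geq 3$;
  \item $r^{-1}s\neq s$, and $r^{-1}s=r^2s$ would force $r^3=e$, which is excluded since $n\geq 4$;
  \item $rs\neq s$ and $rs\neq r^2s$.
\end{itemize}
Hence $(Sr)\cap(S\setminus T)=\emptyset$ and \eqref{eq:C2} holds vacuously. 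I note that for $n=3$ the overlap $z=r^{-1}s=v$ gives $p(zr^{-1})=p(s)=0\neq 1=p(v)$. This explains the restriction to $n\geq 4$.

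\emph{Idempotence for all $n\geq 3$.} By Lemma~\ref{thm:q-T}, it suffices to show that no $q\in A^{SS}$ and nonempty $T\subseteq S_a=\{r,v\}$ satisfy \eqref{eq:copias-T}--\eqref{eq:no-copias-fuera-T}. I split into two cases.

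\emph{Case $r\in T$.} From \eqref{eq:copy}, $q|_{Sr}=r^{-1}\cdot p$. This gives
\[
q(r)=0,\quad q(sr)=0,\quad q(r^2)=1,\quad q(rs)=q(vr)=p(v)=1.
\]
Since $s\notin T$, condition \eqref{eq:base-fuera-T} gives $q(s)=p(s)=0$, and \eqref{eq:base-fuera-T} at $e$ gives $q(e)=0$. Now compute $q_s$:
\[
q_s(e)=q(s)=0,\quad q_s(s)=q(e)=0,\quad q_s(r)=q(rs)=1,\quad q_s(v)=q(r^2s^2)=q(r^2)=1.
\]
Thus $q_s=p$, contradicting \eqref{eq:no-copias-fuera-T} for $s\in S\setminus T$.

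\emph{Case $T=\{v\}$.} Here $q_v=p$ gives $q(e)=q(v^2)=p(v)=1$. But $e\notin T$, so \eqref{eq:base-fuera-T} requires $q(e)=p(e)=0$, a contradiction.

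These two cases cover $\{r\}$, $\{v\}$ and $\{r,v\}$, so $\tau$ is idempotent.

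I expect the main work to lie in the group-element bookkeeping, namely identifying $ts$ and $vr$ correctly under the right-translate convention $q_t(u)=q(ut)$. The conceptual key is the first case. There, noncommutativity makes $rs=vr$ land inside $Sr$, so the copy of $p$ on $Sr$ forces $q_s$ to be a second copy of $p$. This is exactly the failure of the $q_s\neq p$ step in the proof of Theorem~\ref{thm:main}.
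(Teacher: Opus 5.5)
Your proof is correct and follows essentially the same route as the paper. For compatibility, both arguments show $Sr\cap(S\setminus T)=\emptyset$. For idempotence, both use $v^2=e$ to exclude $v$ as an active center, and the identities $vr=rs$, $vs=r^2$ to force $q_s=p$ once $r$ is active. The only difference is organizational: you work directly with conditions \eqref{eq:copias-T}--\eqref{eq:no-copias-fuera-T} of Lemma~\ref{thm:q-T} and split on $r\in T$ versus $T=\{v\}$, whereas the paper starts from a witness $q$ of Lemma~\ref{lem:q}, first rules out $v\in T(q)$, and then deduces $r\in T(q)$.
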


\begin{proof}
We first verify that $T=\{r\}$ is compatible when $n\geq 4$. Since $T$ has only one element, condition~\eqref{eq:C1} is automatically satisfied. For condition~\eqref{eq:C2}, observe that $sr=r^{-1}s$ and $vr=r^2sr=rs$, so
\[
Sr=\{r,\ r^{-1}s,\ r^2,\ rs\},
\qquad
S\setminus T=\{e,\ s,\ r^2s\}.
\]
These sets are disjoint. The rotations $r$ and $r^2$ differ from $e$, and they are not reflections. For the reflections, $r^{-1}s=s$ and $rs=s$ would imply $r=e$, $rs=r^2s$ would imply $r=e$, and $r^{-1}s=r^2s$ would imply $r^3=e$, which is impossible for $n\geq 4$. Therefore $Sr\cap(S\setminus T)=\emptyset$, and condition~\eqref{eq:C2} is satisfied vacuously.

We now prove that $\tau$ is idempotent for every $n\geq 3$. Suppose, for a contradiction, that $\tau$ is not idempotent. By Lemma~\ref{lem:q}, there exists $q\in A^{SS}$ such that
\begin{equation}
\label{eq:dihedral-witness}
\mu(q_g)=p(g)
\qquad\text{for every }g\in S.
\end{equation}
By Lemmas~\ref{lem:Tq-nonempty} and~\ref{thm:q-T}, the set $T(q)$ is nonempty and $T(q)\subseteq S_a=\{r,v\}$. Since $\mu(p)=1$, \eqref{eq:dihedral-witness} gives $q_e\neq p$ and $q_s\neq p$, and then laziness gives
\[
q(e)=q_e(e)=\mu(q_e)=p(e)=0
\qquad\text{and}\qquad
q(s)=q_s(e)=\mu(q_s)=p(s)=0.
\]

Assume first that $v\in T(q)$, so that $q_v=p$. Since $v=r^2s$ is a reflection, $v^2=e$, and therefore
\[
q(e)=q(vv)=q_v(v)=p(v)=1,
\]
a contradiction. Hence $v\notin T(q)$, and so $r\in T(q)$; that is, $q_r=p$.

We now prove that $q_s=p$. At the coordinates $e,s\in S$, we have
\[
q_s(e)=q(s)=0=p(e)
\qquad\text{and}\qquad
q_s(s)=q(s^2)=q(e)=0=p(s).
\]
At the coordinate $r$, since $vr=rs$ and $q_r=p$,
\[
q_s(r)=q(rs)=q(vr)=q_r(v)=p(v)=1=p(r).
\]
Finally, since $vs=r^2$,
\[
q_s(v)=q(vs)=q(r^2)=q_r(r)=p(r)=1=p(v).
\]
Therefore $q_s=p$, which contradicts $q_s\neq p$. By Lemma~\ref{lem:q}, $\tau$ is idempotent.
\end{proof}

For $n=3$, the cellular automaton $\tau$ is still idempotent, but $T=\{r\}$ is not compatible: $r^{-1}s=r^2s$ lies in $Sr\cap(S\setminus T)$, and \eqref{eq:C2} would require $p(r^2sr^{-1})=p(s)=0$ to equal $p(r^2s)=1$. For $n=2$ the construction degenerates, since $r^2s=s$.

\begin{remark}\label{rem:mechanism}
The proof of Proposition~\ref{prop:dihedral-family} shows exactly where hypothesis~\eqref{eq:H} is needed. In the proof of Theorem~\ref{thm:main}, for $s\in S\setminus T$ with $p(s)=p(e)$, hypothesis \eqref{eq:H} gives $ts=st$, so the copy of $p$ on $St$ forces $q(ts)=p(s)\neq p(t)$, which prevents a copy of $p$ on $Ss$. Over $D_n$, with $t=r$, we have instead $rs=vr$, so the copy of $p$ on $Sr$ forces $q(rs)=p(v)=1=p(r)$. Together with the values forced at the other coordinates, this produces a complete copy of $p$ on $Ss$.
\end{remark}


\section*{Acknowledgments} 
The first and third authors were supported by SECIHTI \textit{Becas Nacionales para Estudios de Posgrado.} The fourth author was supported by a SECIHTI Postdoctoral Fellowship \emph{Estancias Posdoctorales por M\'exico}, No. I1200/320/2022.  

\section*{Declaration on the use of artificial intelligence}

The authors used ChatGPT 6 Astra and Claude Opus 5.5 to assist with mathematical exploration and discovery. The authors wrote and verified all arguments, and take
full responsibility for the manuscript.

\end{document}